\newtheorem{theorem}{Theorem}[section]
\newtheorem{lemma}[subsection]{Lemma}
\newtheorem{proposition}[theorem]{Proposition}
\newtheorem{corollary}[theorem]{Corollary}
\newtheorem{remark}[theorem]{Remark}
\newtheorem{definition}[theorem]{Definition}
\begin{document}

\numberwithin{equation}{section}

\title{A Minkowski inequality for the static Einstein-Maxwell space-time}

\author{Benedito Leandro$^1$, Ana Paula de Melo$^2$ and Hudson Pina$^3$}

\footnotetext[1]{Universidade Federal de Goi\'as, IME, CEP 74690-900, Goi\^ania, GO, Brazil. \textsf{bleandroneto@ufg.br}}

\footnotetext[2]{Universidade Federal de Goi\'as, IME, CEP 74690-900, Goi\^ania, GO, Brazil. \textsf{anapmelocosta@gmail.com}}
	 \footnotetext[2]{Ana Paula de Melo was partially supported by PROPG-CAPES [Finance Code 001].}

\footnotetext[3]{Universidade Federal de Mato Grosso, CEP 78600-000, Barra do Garças, MT, Brazil. \textsf{hudsonmat@gmail.com}}

\date{}

\maketitle{}

\begin{abstract}
In this paper we prove a Minkowski-like inequality for an asymptotically flat static Einstein-Maxwell (electrostatic) space-time using as approach the inverse mean curvature flow (IMCF). Moreover, we discuss the importance of this inequality in the understanding of the photon sphere.
\end{abstract}

\vspace{0.2cm} \noindent \emph{2020 Mathematics Subject
Classification} : 53C20; 53E10; 53C21.

\vspace{0.4cm}\noindent \emph{Keywords}: Einstein-Maxwell equation; Asymptotically flat; Minkowski inequality; Photon spheres.

\ 

\section{Introduction and Main Results}

The Minkowski inequality is an estimate from below for the total mean curvature of a hypersurface of a given Riemannian manifold (cf. \cite{brendle,huisken,li2017,mccormick,wei} and the references therein). Physically, this inequality is related with the Penrose inequality, an isoperimetric inequality (cf. \cite{gibbons,huisken}).

In this work we will prove the Minkowski inequality for the dimensionally reduced static Einstein–Maxwell (or electrostatic) manifold (cf. \cite{cederbaum2016,sophia,yaza2015} and the references therein):

\begin{definition} 
\label{defA} A Riemannian manifold $(M^{n},g)$ such that $f,\,\psi:M\rightarrow\mathbb{R}$ are smooth functions satisfying
\begin{eqnarray}\label{principaleq}
    f{Ric}={\nabla^2}f- \frac{2}{f}\nabla\psi\otimes\nabla\psi + \frac{2}{(n-1)f} |\nabla\psi|^{2}g,
\end{eqnarray}
\begin{eqnarray}\label{lapla}
\Delta f=2\left(\frac{n-2}{n-1}\right)\frac{|\nabla\psi|^{2}}{f}
\end{eqnarray} 
and
\begin{eqnarray}\label{diver}
div\left(\frac{\nabla\psi}{f}\right)=0,
\end{eqnarray}
where $Ric$ and $R$ denote the Ricci tensor and the scalar curvature of $(M^n,\, g).$ Moreover, ${\nabla}^{2}$ and $div$ are, respectively, the Hessian and the divergence for $g$ and $\Delta$ is the Laplacian operator. Furthermore, $f>0$ on $M$, $\partial M$ is the boundary of $M$ and $f=0$ in $\partial M$. Then $(M^n,\,g,\,f,\,\psi)$ is called an {\it electrostatic system.}
\end{definition} 
The above definition implies that the scalar curvature $R$ for the metric $g$ is given by
\begin{eqnarray}\label{scalarcurv}
f^{2}R=2|\nabla\psi|^{2}.
\end{eqnarray}

A classical solution for the electrostatic system is the Reissner–Nordstr\"om manifold. It was proved that an asymptotically flat static Einstein-Maxwell space (with some suitable initial boundary conditions) must be the Reissner–Nordstr\"om manifold (cf. \cite{chrusciel,sophia,Ruback} for a good over view). This solution represents a model for a static black hole electrically charged. The classification of static Einstein-Maxwell space for the general case, however,
still remains open.

Let $\Omega$ be a bounded domain with smooth boundary in $(M^n,\,g)$. Then there are two cases to consider:
\begin{itemize}
    \item $\Omega$ has only one boundary component $\Sigma=\partial\Omega$ and we say that $\Sigma$ is null-homologous;
    \item $\Omega$ has only one boundary component $\Sigma\cup\partial M=\partial\Omega$ and we say that $\Sigma$ is homologous to the horizon $\partial M$.
\end{itemize}
The boundary hypersurface $\Sigma$ is said to be outward minimizing if whenever $E$ is a domain containing $\Omega$, then $|\partial E|\geq|\partial \Omega|.$ An outward minimizing hypersurface must be a mean-convex hypersurface.

We will apply the inverse mean curvature flow (IMCF) to prove the Minkowski inequality, which we define now (cf. \cite{bethuel2006,huisken,huisken2008,huisken2009}). The classical (smooth) IMCF is a one-parameter family of hypersurfaces $\Sigma_t$ given by $x : \Sigma\times [0,\,T )\rightarrow M$ that evolves with speed proportional to the reciprocal of the mean curvature:
\begin{eqnarray}\label{IMCF}
\frac{\partial x}{\partial t}=\frac{1}{H}\nu,
\end{eqnarray}
where $\nu$ is the unit normal pointing towards infinity and $H(x,\,t)$ is the mean curvature of $\Sigma_t$. We assume the mean curvature $H(x,\,0)$ for the initial hypersurface $\Sigma$ is positive. In general, the flow does not remain smooth for all time (see the torus example in \cite{huisken}), and one must work with a weak formulation of IMCF, which properly jumps past times where the flow fails to be smooth. However, it was proved that if $H(x,\,t)$ is positive the IMCF is smooth (cf. $28'26''$ in \cite{huisken2009}). 
Therefore, in this paper we assume from now on that the IMCF will work accordingly, and for that we will accept $H(x,\,t)>0$ to avoid the technicalities which appear in the IMCF like in \cite{li2017,mccormick,wei}. 

In \cite{huisken}, Huisken and Ilmanen developed the weak solution of IMCF to overcome the smootheness problem of the IMCF. The evolving hypersurfaces are given by the level-sets of a scalar function $u:M\rightarrow\mathbb{R}$ such that $$\Omega_t=\{(x,\,t)\in\Sigma\times[0,\,\infty);\, u(x,\,t)\leq t\}$$
and 
 $$\Sigma_t=\{(x,\,t)\in\Sigma\times[0,\,\infty);\, u(x,\,t) = t\}.$$
Whenever $u$ is smooth with non vanishing gradient, $\nabla u\neq0$. Under our settings there will exist a weak solution for IMCF (cf. \cite{huisken}). In \cite{huisken}, Huisken and Ilmanen ensured the existence of a weak solution for the IMCF for an asymptotically flat manifold with dimension $n\leq7.$ Moreover, the weak solution will be smooth for all $t$ if the mean curvature of $\Sigma_t$ is positive. 

Now, we define on each $\Sigma_t$ the functional
\begin{eqnarray*}
Q(t):=|\Sigma_{t}|^{-\frac{n-2}{n-1}}\left(\int_{\Sigma_{t}}fHd\mu_{t}-n(n-1)\int_{\Omega_t} fR dv_t\right),
\end{eqnarray*}
where $|\Sigma_t|$ is the area of $\Sigma_t.$ We will prove that this is monotone along the weak inverse mean curvature flow (IMCF). This monotonicity has been
used previously (cf. \cite{brendle,li2017,mccormick,wei}) to provide the Minkowski inequalities in Schwarzschild, Kottler, Schwarzschild-AdS manifolds and asymptotically flat static vacuum space-times in general.

Our goal with this paper is to take a glance at how the Minkowski inequality proved by \cite{brendle,li2017,mccormick,wei} can be generalized to an asymptotically flat electrostatic system. The Schwarzschild, Reissner–Nordstr\"om and Majumdar-Papapetrou solutions are well-known examples of electrostatic systems.

 The scalar curvature $R$ for the Reissner–Nordstr\"om manifold is given by: $$R=\dfrac{(n-1)(n-2)q^{2}}{r^{2(n-1)}},$$
where $q$ is the electric charge and $r$ is the radial coordinate of the black hole of ADM mass $m$. Moreover, the Reissner–Nordstr\"om manifold is well defined for $r>(m+\sqrt{m^{2}-q^{2}})^{1/(n-2)}$. So, we can see that the scalar curvature of the Reissner-Nordstr\"om solution is decreasing as long $r$ increases, and goes to zero at infinity. Moreover, is reasonable to expect that the scalar curvature will
be bounded from above. A straightforward computation shows us that the scalar curvature for the Reissner-Nordstr\"om space is {\it subharmonic} (see \cite[Remark 5]{sophia}), i.e., $$\Delta R\geq0.$$

The idea of the proof is simply to mix IMCF with the flow generated by the normal exponential map for a conformal metric.

Without further ado, we state our main result.

\noindent{\bf Main Theorem.}{\it\,
 Consider that $(M^n,\,g,\,f,\,\psi)$, $3\leq n\leq7$, is an asymptotically flat electrostatic manifold such that the scalar curvature is subharmonic. Let $\Omega\subset M$ denote the region bounded by $\Sigma$, null-homologous, that is outer-minimizing with positive mean curvature $H$. Then, we have
\begin{eqnarray*}
\int_{\Sigma}fHdS-n(n-1)\int_{\Omega} fR dv\geq(n-1)(w_{n-1})^{\frac{1}{(n-1)}}|\Sigma|^{\frac{n-2}{n-1}},
\end{eqnarray*}
where $w_{n-1}$ is the area of the $(n-1)$-dimensional standard sphere of radius 1.}

\begin{remark}
Our result is a natural generalization of the classical Minkowski inequality for convex hypersurface $\Sigma$ in $\mathbb{R}^n$, which states that
\begin{eqnarray*}
\int_{\Sigma}HdS\geq(n-1)(w_{n-1})^{\frac{1}{(n-1)}}|\Sigma|^{\frac{n-2}{n-1}}.
\end{eqnarray*}
\end{remark}
\begin{remark}
The homologous case for the electrostatic system is more delicate. In \cite{brendle2012}, the author showed the differences between the null-homologous and the homologous cases for the Reissner-Nordstr\"om space. Also, in \cite[Theorem 1.1]{wei} this distinction was carefully considered. 
\end{remark}

To prove our next result we recommend to the reader to see Remark \ref{asymphomo}. 
\begin{corollary}
 Consider that $(M^n,\,g,\,f,\,\psi)$, $3\leq n\leq7$, is an asymptotically flat electrostatic manifold such that the scalar curvature is subharmonic. Let $\Omega\subseteq M$ with (homologous) boundary $\partial\Omega=\Sigma\cup\partial M$ such that $\Sigma$ is outer-minimizing with positive mean curvature $H$. Then, we have
\begin{eqnarray*}
\int_{\Sigma}fHdS-n(n-1)\int_{\Omega} fR dv\geq(n-1)(w_{n-1})^{\frac{1}{(n-1)}}|\Sigma|^{\frac{n-2}{n-1}} - 2m(n-1)w_{n-1},
\end{eqnarray*}
where $w_{n-1}$ is the area of the $(n-1)$-dimensional standard sphere of radius 1.
\end{corollary}

The Reissner–Nordstr\"om photon spheres model photons spiraling around the central black hole or naked singularity ‘at a fixed distance’. A Reissner–Nordstr\"om space-time can contain a photon sphere (or not), on which light can get trapped (cf. \cite{cederbaum2016,galloway,sophia,yaza2015} and the references therein). The existence of such photon sphere depends on the relationship between mass and electric charge of the black hole. 


It was proved that a photon sphere in an asymptotically flat electrostatic system with an electro-magnetic energy–momentum tensor satisfying the NEC (null energy condition) has positive mean curvature (cf. Theorem 3.1 in \cite{galloway} and Lemma 2.6 in \cite{cederbaum2017}). Thus, the positivity of the mean curvature of $\Sigma$ in the Main Theorem can be replaced by the NEC (when $\Sigma$ is a photon sphere).
This shows that the technicality of the IMCF can be avoided assuming some particular (and natural) hypothesis, although as far as we know there are
no known examples of null-homologous photon spheres in an electrostatic
system. We remember that in the photon sphere $\Sigma$ the mean curvature  and the static potential $f$ are constants.

\section{Background}

 In what follows we will define what is an asymptotically flat space-time (cf. \cite{kunduri2018,mccormick,Ruback,yaza2015}). From now on, the scalar curvature is subharmonic.

\begin{definition}\label{def2}
	A solution $(M^{n},\,g,\,f,\,\psi)$ for \eqref{principaleq}, \eqref{lapla} and \eqref{diver} is said to be asymptotically flat with one end if $M$ minus a compact set is diffeomorphic to $\mathbb{R}^{n}$ minus a closed ball centered at
the origin, and the metric $g$ and the static potential $f$ satisfy the following asymptotic
	expansions at infinity.
	\begin{itemize}
		\item[(I)] Let $r^2=\|x\|$, $x\in M$, $\delta$ be the flat metric and $\eta_{ij}(x)=o(r^{2-n})$ as $r\rightarrow\infty$,  $$g_{ij}(x)=\delta_{ij}(x)+\eta_{ij}(x).$$ 
		\item[(II)] For $\omega=o(r^{2-n})$, $r\rightarrow\infty$,
		\begin{eqnarray*}
			f=1-\dfrac{m}{r^{n-2}}+\omega.
		\end{eqnarray*}
		\item[(IV)] Moreover,
		\begin{eqnarray*}
			\partial_{l}\eta_{ij}=o(r^{1-n});\quad \partial_{i}\omega=o(r^{-n})\quad\mbox{and}\quad\partial_{i}\partial_{j}\omega=o(r^{-(n+1)}),
		\end{eqnarray*}
	\end{itemize}
	where $1\leq l,\,i,\,j\leq n$. Here $m\in\mathbb{R}$ is the ADM mass.
\end{definition}

For an asymptotically flat  space-time a straightforward computation ensures us that 
\begin{eqnarray}\label{intinfty}
\displaystyle\lim_{r\rightarrow\infty}\int_{\mathbb{S}(r)}\langle\nabla f,\,\eta\rangle dS
	=(n-2)m\displaystyle\lim_{r\rightarrow\infty}\dfrac{1}{r^{n-1}}\int_{\mathbb{S}(r)}dS=(n-2)w_{n-1}{m},
\end{eqnarray}
where $\eta$ is the outward normal vector field of the $(n-1)$-dimensional standard sphere $\mathbb{S}$ of radius $r$, and $w_{n-1}$ is the area of the $(n-1)$-dimensional standard sphere $\mathbb{S}$ of radius 1.

The proof of the Main Theorem is based on \cite{brendle2012,brendle,mccormick,wei}. That being said, we remember some preliminar facts used by Brendle \cite{brendle2012}. 


We assume here that $\partial\Omega=\Sigma$ (null-homologous). Let $\nu$ denote the outward-pointing unit normal to $\Sigma$. We will accept throughout this section that $\Sigma$ has positive mean curvature with respect to this choice of unit normal. Consider the conformally modified metric $\hat{g}=\dfrac{1}{f^{2}}g$ (cf. \cite{khunel1988} for the conformal changes of the metric). 
 
 Furthermore, denote by $\Phi:\Sigma\times[0,\,\infty)\rightarrow\hat{\Omega}$ the normal exponential map (cf. \cite{petersen}) with respect to the metric $\hat{g}$. More precisely, for each point $x\in\Sigma$ the curve $\gamma_{x}(t)=\Phi(x,\,t)$ is a geodesic with respect to $\hat{g}$, and then
\begin{eqnarray*}
\Phi(x,\,0)=x\quad\mbox{and}\quad \Phi(x,\,t)=\exp_{x}{\big(-tf(\gamma(t))\nu\big)}.
\end{eqnarray*}
Therefore,
\begin{eqnarray}\label{confflow}
\frac{\partial\Phi}{\partial t}(x,\,t)\Bigg|_{t=0}=-f(x)\nu(x).
\end{eqnarray}
Note that the geodesic $\gamma$ has unit speed with respect to $\hat{g}.$ 

Next we define the following sets
\begin{eqnarray}
\Sigma_t=\{(x,\,t)\in\Sigma\times[0,\,\infty):\, u(\Phi(x,\,t))=t\}
\end{eqnarray}
and
\begin{eqnarray}
\Omega_t=\{(x,\,t)\in\Sigma\times[0,\,\infty):\,(x,\,t+\delta)\in \Sigma_t;\,\delta>0\},
\end{eqnarray}
where $u=dist_{\hat{g}}(p,\,\Sigma)$ is the distance function of $p\in\hat{\Omega}$ from $\Sigma$ with respect to $\hat{g}$.
The set $\Sigma_t$ is closed, and we have $\Phi(\Sigma_t)=\hat{\Omega}$ (cf. Proposition 3.1. in \cite{brendle2012}). To fix notation, we denote by $H$ and $h$ the mean curvature and second fundamental form of $\Sigma_t$ with respect to the metric $g.$

The following lemmas will be true for a solution $(M^{n},\,g,\,f,\,\psi)$ of the electrostatic system.

\begin{lemma}\label{smootheness}
Assume that $\partial\Omega=\Sigma$ has positive mean curvature. Then, the hypersurface $\Sigma_t$ has positive mean curvature for each $t$ and satisfies the  differential inequality $$\frac{\partial}{\partial t}\left(\frac{fR}{H}\right)\leq\frac{-Rf^{2}}{(n-1)}.$$

\end{lemma}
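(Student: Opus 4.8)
The plan is to regard the $\Sigma_t$ as the equidistant hypersurfaces produced by the conformal normal flow $\Phi$ and to differentiate $f/H$ directly along this flow. The first step is to record the velocity of the flow for every $t$, not merely at $t=0$: since each $\gamma_{x}$ is a unit-speed $\hat{g}$-geodesic and orthogonality is a conformal invariant, the Gauss lemma forces $\gamma_{x}'(t)$ to remain $g$-orthogonal to $\Sigma_t$ with $|\gamma_{x}'(t)|_{g}=f$, so that $\frac{\partial\Phi}{\partial t}=-f\nu$ holds along the entire flow (extending \eqref{confflow}). Thus $\Sigma_t$ evolves as a normal flow with speed function $\phi=-f$, and I may invoke the standard evolution equation $\frac{\partial H}{\partial t}=-\Delta_{\Sigma}\phi-\phi\big(|h|^{2}+Ric(\nu,\nu)\big)$, which here becomes $\frac{\partial H}{\partial t}=\Delta_{\Sigma}f+f|h|^{2}+f\,Ric(\nu,\nu)$.

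The second step is to remove the curvature and tangential-Laplacian terms using the electrostatic equations. I would rewrite the intrinsic Laplacian via $\Delta_{\Sigma}f=\Delta f-\nabla^{2}f(\nu,\nu)-H\langle\nabla f,\nu\rangle$, and then substitute the $(\nu,\nu)$-component of \eqref{principaleq}, namely $f\,Ric(\nu,\nu)=\nabla^{2}f(\nu,\nu)-\frac{2}{f}(\partial_{\nu}\psi)^{2}+\frac{2}{(n-1)f}|\nabla\psi|^{2}$. The Hessian terms cancel, and combining what remains with the trace identity $\Delta f+\frac{2}{(n-1)f}|\nabla\psi|^{2}=fR=\frac{2}{f}|\nabla\psi|^{2}$ (a consequence of \eqref{lapla} and \eqref{scalarcurv}), the normal component of $|\nabla\psi|^{2}$ drops out. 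This leaves the clean identity $\frac{\partial H}{\partial t}=\frac{2}{f}|\nabla^{\Sigma}\psi|^{2}-H\langle\nabla f,\nu\rangle+f|h|^{2}$, where $\nabla^{\Sigma}\psi$ is the part of $\nabla\psi$ tangent to $\Sigma_t$.

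The third step is the differentiation of $f/H$. Since $\frac{\partial f}{\partial t}=\langle\nabla f,-f\nu\rangle=-f\langle\nabla f,\nu\rangle$, the two terms containing $\langle\nabla f,\nu\rangle$ cancel in $\frac{\partial}{\partial t}\big(\frac{f}{H}\big)=\frac{1}{H}\frac{\partial f}{\partial t}-\frac{f}{H^{2}}\frac{\partial H}{\partial t}$, leaving the manifestly nonpositive expression $\frac{\partial}{\partial t}\big(\frac{f}{H}\big)=-\frac{2|\nabla^{\Sigma}\psi|^{2}}{H^{2}}-\frac{f^{2}|h|^{2}}{H^{2}}$. Discarding the nonnegative $\psi$-term and applying the elementary inequality $|h|^{2}\geq\frac{H^{2}}{n-1}$ gives $\frac{\partial}{\partial t}\big(\frac{f}{H}\big)\leq-\frac{f^{2}}{n-1}$, which is the asserted inequality. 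For positivity of $H$, I would note that $f/H$ is positive at $t=0$ by hypothesis and strictly decreasing by the bound just proved; since $f>0$ in the interior, $H$ cannot reach $0$ along the flow, as that would force $f/H\to+\infty$ and contradict the monotone decrease from a finite value. Hence $H>0$ is preserved.

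I expect the main obstacle to be the rigorous justification of the first step: that the conformal geodesic flow genuinely yields the velocity field $-f\nu$ for all $t$, and that the hypersurfaces $\Sigma_t$ (defined through the weak IMCF solution $u$ and $\Phi$) remain regular enough for the smooth evolution equation to be applied, together with keeping all sign conventions in the evolution equation for $H$ consistent with the outward choice of $\nu$. Once the flow is correctly identified, the algebra collapses because the electrostatic equations \eqref{principaleq}–\eqref{scalarcurv} are precisely what is needed to eliminate the Ricci and Hessian contributions.
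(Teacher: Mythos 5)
Your proposal is correct and follows essentially the same route as the paper: the flow $\partial_t\Phi=-f\nu$, the standard evolution equation for $H$ under a normal flow, the substitution of the electrostatic equations \eqref{principaleq}--\eqref{lapla} into $\Delta^{\Sigma_t}f+f\,Ric(\nu,\nu)$ with the tangential part of $\nabla\psi$ supplying the good sign, and the cancellation of the $\langle\nabla f,\nu\rangle$ terms in $\partial_t(f/H)$. The only cosmetic differences are that you keep the exact identity with $|\nabla^{\Sigma}\psi|^2$ where the paper applies Cauchy--Schwarz earlier, and you prove $H>0$ by a blow-up contradiction for $f/H$, while the paper integrates the equivalent inequality $\partial_t(H/f)\geq H^2/(n-1)\geq 0$ directly.
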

\begin{proof}
Since 
\begin{eqnarray*}
\Delta^{\Sigma_t}f+\nabla^{2}f(\nu,\,\nu)=\Delta f-H\langle\nabla f,\,\nu\rangle.
\end{eqnarray*}
From Definition \ref{defA} we get
\begin{eqnarray*}
\Delta^{\Sigma_t}f+f{Ric}(\nu,\,\nu) +\frac{2}{f}\left(\langle\nabla\psi,\,\nu\rangle^{2}-\frac{|\nabla\psi|^{2}}{n-1}\right)= 2\left(\frac{n-2}{n-1}\right)\frac{|\nabla\psi|^{2}}{f} -H\langle\nabla f,\,\nu\rangle.
\end{eqnarray*}
That is,
\begin{eqnarray*}
\Delta^{\Sigma_t}f+f{Ric}(\nu,\,\nu) = \frac{-2}{f}\left(\langle\nabla\psi,\,\nu\rangle^{2}-|\nabla\psi|^{2}\right) -H\langle\nabla f,\,\nu\rangle.
\end{eqnarray*}
Then, from Cauchy-Schwarz inequality we have
\begin{eqnarray}\label{AMP}
\Delta^{\Sigma_t}f\geq-fRic(\nu,\,\nu)-H\langle\nabla f,\,\nu\rangle.
\end{eqnarray}

Assuming that $\frac{\partial}{\partial t}\Phi=-f(\Phi(x,\,t))\nu,$ where $\nu=-\dfrac{\nabla u}{|\nabla u|}$ denotes the outward-pointing unit normal vector to $\Sigma_t$ with respect to the metric $g$. Hence, the mean curvature of $\Sigma_t$ satisfy the evolution equations (cf. \cite{bethuel2006,brendle,huisken,huisken2008}):
\begin{eqnarray*}
\frac{\partial H}{\partial t}=\Delta^{\Sigma_t}f+f(|h|^{2}+Ric(\nu,\,\nu)).
\end{eqnarray*}
Therefore, from \eqref{AMP} we get
\begin{eqnarray*}
\frac{\partial H}{\partial t}\geq-H\langle\nabla f,\,\nu\rangle+f|h|^{2}.
\end{eqnarray*}

Thus, since $$    \frac{\partial f}{\partial t}=d(f\circ\Phi)(\partial_t)=\langle\nabla f,\,\frac{\partial\Phi}{\partial t}\rangle=-f\langle \nabla f,\nu \rangle\quad\mbox{and}\quad (n - 1)|h|^2 \geq H^2$$ we get
\begin{eqnarray*}
\frac{\partial}{\partial t}\left(\frac{fR}{H}\right)=\frac{1}{H}\frac{\partial}{\partial t}(fR)-\frac{fR}{H^{2}}\frac{\partial}{\partial t}H\leq\frac{f}{H}\frac{\partial R}{\partial t}-\frac{Rf^{2}}{(n-1)}. 
\end{eqnarray*}
Considering that $\Delta R\geq0$ and $R\geq0$, from Hopf's lemma we get
\begin{eqnarray*}
\frac{\partial}{\partial t}\left(\frac{fR}{H}\right)\leq-\frac{f^2}{H}\langle\nabla R,\,\nu\rangle-\frac{Rf^{2}}{(n-1)}\leq\frac{-Rf^{2}}{(n-1)}. 
\end{eqnarray*}

On the other hand, making a similar computation we get
\begin{eqnarray*}
\frac{\partial}{\partial t}\left(\frac{H}{f}\right)=\frac{1}{f}\frac{\partial}{\partial t}H-\frac{H}{f^{2}}\frac{\partial}{\partial t}f\geq\frac{H^{2}}{(n-1)}
\end{eqnarray*}
at each point on $\Sigma_t.$

By integration, we obtain
\begin{eqnarray*}
\frac{H}{f}(x,\,\tau)-\frac{H}{f}(x,\,0)\geq\int_{0}^{\tau}\frac{H^{2}}{(n-1)}dt,
\end{eqnarray*}
where $\tau\in[0,\,\infty).$

Since the initial hypersurface $\Sigma$ has positive mean curvature and $f>0$ in $M$, we conclude that the hypersurface $\Sigma_t$ has positive mean curvature for each $t$. Moreover, the area form on $\Sigma_t$ is monotone decreasing in $t$ (cf. \cite[Lemma 7.4]{bethuel2006}). 
\end{proof}

Now,  for a bounded static potential $f$ we define on each $\Sigma_t$ the functional
\begin{eqnarray*}
\tilde{Q}(t):=(n-1)\int_{\Sigma_{t}}\frac{fR}{H}d\mu_{t}
\end{eqnarray*}

\begin{lemma}\label{qrt}
Assume that $\partial\Omega=\Sigma$ with positive mean curvature. Then,
\begin{eqnarray*}
(n-1)\int_{\Sigma}\frac{fR}{H}d\mu\geq n\int_{\Omega}fRdv.
\end{eqnarray*}
\end{lemma}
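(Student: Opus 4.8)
The plan is to show that the functional $\tilde{Q}(t)=(n-1)\int_{\Sigma_t}\frac{f}{H}d\mu_t$ is monotone non-increasing along the conformal normal exponential flow $\Phi$ (the flow with velocity $\frac{\partial}{\partial t}\Phi=-f\nu$ used in Lemma \ref{smootheness}), and then to integrate this monotonicity from $t=0$ to $t=\infty$. Since at $t=0$ we have $\Sigma_0=\Sigma$ and $\Omega_0=\Omega$, the inequality will drop out once the loss along the flow is identified with $n\int_\Omega f\,dv$.

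First I would differentiate $\tilde{Q}(t)$. The only two ingredients are the pointwise inequality from Lemma \ref{smootheness}, namely $\frac{\partial}{\partial t}\left(\frac{f}{H}\right)\leq\frac{-f^2}{n-1}$, and the evolution of the area element under a purely normal flow with normal speed $-f$, which is $\frac{\partial}{\partial t}(d\mu_t)=-fH\,d\mu_t$. Combining them gives
\begin{eqnarray*}
\frac{d}{dt}\tilde{Q}(t)=(n-1)\int_{\Sigma_t}\frac{\partial}{\partial t}\!\left(\frac{f}{H}\right)d\mu_t-(n-1)\int_{\Sigma_t}\frac{f}{H}\,fH\,d\mu_t\leq -\int_{\Sigma_t}f^2\,d\mu_t-(n-1)\int_{\Sigma_t}f^2\,d\mu_t=-n\int_{\Sigma_t}f^2\,d\mu_t.
\end{eqnarray*}

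Next I would identify the right-hand side with a bulk integral via the coarea formula. Since the flow curve $\gamma_x$ has unit speed with respect to $\hat{g}=\frac{1}{f^2}g$, its $g$-speed is $f$, so the change of variables $(x,t)\mapsto\Phi(x,t)$ foliating $\bar\Omega$ (cf. Proposition 3.1 in \cite{brendle2012}, which gives $\Phi(\Sigma_t)=\bar{\Omega}$) yields $dv=f\,d\mu_t\,dt$ and hence $\int_{\Omega}f\,dv=\int_0^{\infty}\int_{\Sigma_t}f^2\,d\mu_t\,dt$. Integrating the differential inequality over $[0,\infty)$ then gives $\tilde{Q}(\infty)-\tilde{Q}(0)\leq-n\int_0^{\infty}\int_{\Sigma_t}f^2\,d\mu_t\,dt=-n\int_{\Omega}f\,dv$. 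Because $f\geq 0$ and $H>0$ (the latter guaranteed for all $t$ by Lemma \ref{smootheness}), every $\tilde{Q}(t)$ is nonnegative, so $\tilde{Q}(\infty)\geq 0$, and therefore $(n-1)\int_{\Sigma}\frac{f}{H}d\mu=\tilde{Q}(0)\geq n\int_{\Omega}f\,dv$, which is the claim.

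The main obstacle I expect is analytic rather than computational: justifying the passage to the limit $t\to\infty$ and the validity of the coarea change of variables. One must know that $\Phi$ genuinely foliates $\bar\Omega$, that the leaves $\Sigma_t$ sweep out toward the horizon $\partial M$ where $f=0$, and that the integrals are finite so the fundamental theorem of calculus applies over the noncompact parameter interval; here the fact that $f$ vanishes on $\partial M$ is exactly what makes the limit term harmless (indeed one expects $\tilde{Q}(\infty)=0$, though only $\tilde{Q}(\infty)\geq 0$ is needed). Since we have assumed $\Sigma$ has positive mean curvature, the weak IMCF is smooth and the flow $\Phi$ avoids the technical jump-time subtleties, so these points can be settled by invoking the setup of \cite{brendle2012}; the only genuinely new input is checking that the electrostatic equations \eqref{principaleq}--\eqref{diver} enter solely through Lemma \ref{smootheness}, which they already do.
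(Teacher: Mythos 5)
Your proposal is correct and follows essentially the same route as the paper: differentiate $\tilde{Q}(t)$ using Lemma \ref{smootheness} together with $\frac{\partial}{\partial t}(d\mu_t)=-fH\,d\mu_t$, identify $\int_0^\tau\int_{\Sigma_t}f^2\,d\mu_t\,dt$ with $n\int_{\Omega_\tau}f\,dv$ via the coarea change of variables (the paper phrases this as the volume element being $f^{-1}dv$), and discard the terminal term by positivity of $\tilde{Q}$. The only cosmetic difference is that the paper integrates over $[0,\tau]$ and then lets $\tau\to\infty$, whereas you integrate over $[0,\infty)$ directly, which is why you must comment on $\tilde{Q}(\infty)$ at all.
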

\begin{proof}
From \eqref{confflow}, using that $\dfrac{\partial}{\partial t}(d\mu_t)=-fHd\mu_t$ (cf. \cite[Lemma 7.4]{bethuel2006}) we get
\begin{eqnarray*}
\frac{\partial}{\partial t}\tilde{Q}(t)
&=&(n-1)\int_{\Sigma_t}\dfrac{\partial}{\partial t}\left(\frac{fR}{H}\right)d\mu_t + (n-1)\int_{\Sigma_t}\left(\frac{fR}{H}\right)\dfrac{\partial}{\partial t}(d\mu_t)\nonumber\\
&=&(n-1)\int_{\Sigma_t}\dfrac{\partial}{\partial t}\left(\frac{fR}{H}\right)d\mu_t - (n-1)\int_{\Sigma_t}\left(\frac{fR}{H}\right)fHd\mu_t.
\end{eqnarray*}

Now, we use Lemma \ref{smootheness} to obtain
\begin{eqnarray*}
\frac{\partial}{\partial t}\tilde{Q}(t)
&=&(n-1)\int_{\Sigma_t}\dfrac{\partial}{\partial t}\left(\frac{fR}{H}\right)d\mu_t - (n-1)\int_{\Sigma_t}Rf^{2}d\mu_t\nonumber\\
&\leq& -n\int_{\Sigma_t}Rf^{2}d\mu_t.
\end{eqnarray*}
Then, by integrating the above inequality from $0$ to $\tau$, where $\tau\in[0,\,\infty)$, and using that the volume element is $dv$ from co-area formula we get
\begin{eqnarray*}
\tilde{Q}(0)-\tilde{Q}(\tau)\geq n\int_{0}^{\tau}\left(\int_{\Sigma_t}Rf^{2}d\mu_t\right)dt=n\int_{\Omega_\tau}fRdv,
\end{eqnarray*}
where we use that $u=dist_{\hat{g}}(p,\,\Sigma)$ is the distance function with respect to $\hat{g}$. In fact, $1=|\hat{\nabla} u|_{\hat{g}}=f|\nabla u|$.

Consequently, since $\tilde{Q}(\tau)>0$ we have
\begin{eqnarray*}
\tilde{Q}(0)\geq n\int_{\Omega_\tau}fRdv.
\end{eqnarray*}
Then, from Lemma \ref{smootheness}, for $\tau\rightarrow\infty$,
\begin{eqnarray*}
(n-1)\int_{\Sigma}\frac{fR}{H}d\mu\geq n\int_{\Omega}fRdv.
\end{eqnarray*}
\end{proof}

\begin{remark}\label{asymphomo}
For the homologous case, i.e., $\partial\Omega=\Sigma\cup\partial M,$ the above lemma still holds if we consider a decay at infinity for $R$. Let us consider that $$R=o(r^{-2(n-1)}),$$
for $r\rightarrow\infty.$ Hence, we have
\begin{eqnarray*}
\int_{\Sigma}Rf^{2}d\mu = \int_{\partial\Omega}Rf^{2}d\mu - \int_{\partial M}Rf^{2}d\mu.
\end{eqnarray*}
On the other hand, by Definition \ref{defA}, consider $\partial M=f^{-1}(0)\cup E_{\infty}$, where $E_{\infty}$ is the asymptotically flat end of the manifold. Here, we consider the most general case for $\Omega$. It is possible to have the case where $\partial \Omega= f^{-1}(0)\cup\Sigma.$ Therefore, assuming an asymptotic condition over $R$ we obtain
\begin{eqnarray*}
\int_{\partial M}Rf^{2}d\mu = \displaystyle\lim_{r\rightarrow\infty}\int_{\mathbb{S}^{n-1}}Rf^{2} dS = 0.
\end{eqnarray*}
Thus, 
\begin{eqnarray*}
\int_{\Sigma}Rf^{2}d\mu = \int_{\partial\Omega}Rf^{2}d\mu
\end{eqnarray*}
and Lemma \ref{qrt} holds.
\end{remark}

\section{Proof of the main result}

 Remember that the evolution equations under IMCF are well-known, to be given by equations (1.1) and (1.2) in \cite{huisken} (see also \cite{bethuel2006,huisken2008}):
\begin{eqnarray}\label{eq1}
\frac{\partial H}{\partial t}=-\Delta^{\Sigma_t}H^{-1}-H^{-1}(|h|^{2}+Ric(\nu,\,\nu))
\end{eqnarray}
and
\begin{eqnarray}\label{eq2}
\frac{\partial d\mu_t}{\partial t}=d\mu_t.
\end{eqnarray}

The following proposition is a fundamental key to prove the Minkowski inequality (cf. \cite{brendle,mccormick,wei}).
\begin{proposition}\label{proptotal}
Let $\Sigma_{t}$ (null-homologous) be a weak solution to IMCF (outer-minimizing) for $0 < t_1 < t_2 < T$ such that $\Sigma$ has positive mean curvature on a static Einstein-Maxwell manifold $M$. Then $$Q(t_2)\leq Q(t_1).$$
\end{proposition}
\begin{proof}
A straightforward computation from \eqref{eq2} gives us
\begin{eqnarray*}
    \frac{\partial}{\partial t}\int_{\Sigma_t}fHd\mu_t=\int_{\Sigma_t}\left(H \frac{\partial f}{\partial t}+f\frac{\partial H}{\partial t}+fH\right)d\mu_t.
\end{eqnarray*}
Then, from \eqref{IMCF} and \eqref{eq1} we obtain:
\begin{eqnarray*}
    \frac{\partial}{\partial t}\int_{\Sigma_t}fHd\mu_t&=&\int_{\Sigma_t}\left(H\langle \nabla f,\,\nu\rangle\frac{1}{H}-f\Delta^{\Sigma_t}H^{-1}-fH^{-1}(|h|^{2}+Ric(\nu,\,\nu))+fH\right)d\mu_t\nonumber\\
    &\leq& \int_{\Sigma_t}\left(\langle \nabla f,\,\nu\rangle-H^{-1}(\Delta^{\Sigma_t}f+fRic(\nu,\,\nu))+\frac{n-2}{n-1}fH\right)d\mu_t,
\end{eqnarray*}
where we have used the inequality $(n - 1)|h|^2 \geq H^2$ and the fact that $\Sigma_t$ is closed. The equality holds if $\Sigma_t$ is totally umbilic.

Since 
\begin{eqnarray*}
\Delta^{\Sigma_t}f+\nabla^{2}f(\nu,\,\nu)=\Delta f-H\langle\nabla f,\,\nu\rangle
\end{eqnarray*}
from \eqref{principaleq} and \eqref{lapla} we get
\begin{eqnarray*}
\Delta^{\Sigma_t}f+f{Ric}(\nu,\,\nu) +\frac{2}{f}\left(\langle\nabla\psi,\,\nu\rangle^{2}-\frac{|\nabla\psi|^{2}}{n-1}\right)= 2\left(\frac{n-2}{n-1}\right)\frac{|\nabla\psi|^{2}}{f} -H\langle\nabla f,\,\nu\rangle.
\end{eqnarray*}
That is,
\begin{eqnarray}\label{correcao}
\Delta^{\Sigma_t}f+f{Ric}(\nu,\,\nu) = \frac{-2}{f}\left(\langle\nabla\psi,\,\nu\rangle^{2}-|\nabla\psi|^{2}\right) -H\langle\nabla f,\,\nu\rangle.
\end{eqnarray}

Therefore, by Cauchy-Schwarz inequality and assuming that $H>0$, from \eqref{correcao} we have
\begin{eqnarray*}
    \frac{\partial}{\partial t}\int_{\Sigma_t}fHd\mu_t\leq \int_{\Sigma_t}\left[2\langle \nabla f,\,\nu\rangle+\frac{n-2}{n-1}fH+\frac{2}{fH}\left(\langle\nabla\psi,\,\nu\rangle^{2}-|\nabla\psi|^{2}\right)\right]d\mu_t\nonumber\\
    \leq \int_{\Sigma_t}\left[2\langle \nabla f,\,\nu\rangle+\frac{n-2}{n-1}fH\right]d\mu_t.
\end{eqnarray*}

If $\Sigma_t=\partial\Omega_t$ is null-homologous for all $t$ we have
\begin{eqnarray*}
 \int_{\Sigma_t}\langle \nabla f,\,\nu\rangle d\mu_t= \int_{\Omega_t}\Delta f dv_t 
\end{eqnarray*}
Then, from \eqref{lapla} and \eqref{scalarcurv} we get $$\Delta f =\frac{n-2}{n-1}Rf.$$ Moreover, we obtain
\begin{eqnarray*}
 \int_{\Sigma_t}\langle \nabla f,\,\nu\rangle d\mu_t = \frac{(n-2)}{(n-1)}\int_{\Omega_t} fR dv_t.
\end{eqnarray*}

Since $\Sigma_t$ has positive mean curvature, we now can invoke Lemma \ref{qrt}, i.e.,
\begin{eqnarray*}
n\int_{\Omega_t} fR dv_t\leq (n-1)\int_{\Sigma_t} \frac{fR}{H} d\mu_t.
\end{eqnarray*}
Then, using $$\frac{\partial}{\partial t}\int_{\Omega_t}fR dv_t=\int_{\Sigma_t}\frac{fR}{H} d\mu_t$$ (cf. Proposition 9.1. in \cite{scheuer}) we have
\begin{eqnarray*}
    &&\frac{\partial}{\partial t}\left(\int_{\Sigma_t}fHd\mu_t-n(n-1)\int_{\Omega_t} fR dv_t\right)\leq \int_{\Sigma_t}\left[2\langle \nabla f,\,\nu\rangle+\frac{n-2}{n-1}fH\right]d\mu_t - n(n-1)\int_{\Sigma_t} \frac{fR}{H} d\mu_t \nonumber\\
    &&\leq\frac{n-2}{n-1}\int_{\Sigma_t}fHd\mu_t + 2\frac{(n-2)}{(n-1)}\int_{\Omega_t} fR dv_t -n(n-1)\int_{\Sigma_t} \frac{fR}{H} d\mu_t\nonumber\\
    &&\leq\frac{n-2}{n-1}\int_{\Sigma_t}fHd\mu_t   +\left[\dfrac{2(n-2)}{(n-1)}-n^2\right]\int_{\Omega_t} fR dv_t \nonumber\\
       &&=\frac{n-2}{n-1}\left(\int_{\Sigma_t}fHd\mu_t  +\left[2-\dfrac{n^2(n-1)}{(n-2)}\right]\int_{\Omega_t} fR dv_t\right)\nonumber\\ &&\leq\frac{n-2}{n-1}\left(\int_{\Sigma_t}fHd\mu_t-n(n-1)\int_{\Omega_t} fR dv_t\right).
\end{eqnarray*}

Considering $P(t)=\int_{\Sigma_t}fHd\mu_t  - n(n-1)\int_{\Omega_t} fR dv_t$ we can conclude that
\begin{eqnarray*}
\frac{\partial}{\partial t}P(t)\leq \left(\frac{n-2}{n-1}\right)P(t).
\end{eqnarray*}
Hence, by integration we get
\begin{eqnarray}\label{danada}
P(t_2)\leq P(t_1) e^{\frac{n-2}{n-1}(t_{2}-t_{1})},
\end{eqnarray}
for all $0\leq t_1\leq t_2.$ Since $\Sigma_t$ is outward minimizing, we have that $|\Sigma_t|=e^{t}|\Sigma|$ for all $t$. Therefore, from a simple computation we have $$|\Sigma_{t_{2}}|=e^{t_{2}-t_{1}}|\Sigma_{t_{1}}|.$$
Plugging this into \eqref{danada} we have $$Q(t_{2})\leq Q(t_{1}).$$
We recommended to the reader section 4.2 in \cite{wei} to conclude the monotonicity of the functional $Q(t).$
Here, the smootheness of the IMCF is guaranteed by Lemma \ref{smootheness}.
\end{proof}

\begin{remark}
In the homologous case, following \cite[Theorem 3.2]{wei} we define the functional 
\begin{eqnarray*}
Q(t):=|\Sigma_{t}|^{-\frac{n-2}{n-1}}\left(2m(n-1)w_{n-1}+ \int_{\Sigma_{t}}fHd\mu_{t}-n(n-1)\int_{\Omega_t} fR dv_t\right).
\end{eqnarray*}
Moreover, considering the asymptotic conditions from \eqref{lapla} and \eqref{scalarcurv} we get
\begin{eqnarray}\label{laplahomo}
\int_{\Sigma}\langle \nabla f,\,\nu\rangle d\mu&=&m(n-2)w_{n-1} - \int_{\partial M}\langle \nabla f,\,\nu\rangle d\mu + \frac{n-2}{n-1}\int_{\Omega}Rf dv\nonumber\\
&\leq& m(n-2)w_{n-1} + \frac{n-2}{n-1}\int_{\Omega}Rf dv,
\end{eqnarray}
where we use Hopf's lemma in the last inequality. Remember that $\Delta f\geq0$ and $f\geq0$. Therefore, following the same steps of Proposition \ref{proptotal} we can derive the monotonicity of $Q(t)$.
\end{remark}

 Under the same conditions of the proposition above we can assume for any asymptotically flat manifold that the following lemma is true and the proof can be found in \cite{wei} (see also \cite{huisken}). This fact was observed by Mccormick in \cite{mccormick}.

\begin{lemma}\label{lemmaWei}
 Let $(M^n,\,g)$ be an asymptotically flat manifold. Consider that $\Sigma_{t}$ is a smooth weak solution to IMCF. Then, we have
\begin{eqnarray*}
\lim_{t\rightarrow\infty}|\Sigma_{t}|^{-\frac{n-2}{n-1}}\left(\int_{\Sigma_{t}}fHd\mu_{t}\right)=(n-1)(w_{n-1})^{1/(n-1)}.
\end{eqnarray*}
\end{lemma}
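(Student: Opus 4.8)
The plan is to compute the asymptotic behavior of the rescaled total mean curvature directly from the asymptotic expansions in Definition~\ref{def2}, using the fact that along the IMCF the level sets $\Sigma_t$ approach large coordinate spheres $\mathbb{S}(r)$ with $|\Sigma_t|=e^t|\Sigma|$ growing to infinity. The key point is that as $t\to\infty$ the geometry near $\Sigma_t$ is governed entirely by the flat part of the metric together with the $o(r^{2-n})$ corrections, so the whole quantity should converge to the value it takes for exact coordinate spheres in flat space, namely $(n-1)(w_{n-1})^{1/(n-1)}$.

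First I would record the flat-space baseline. For a round sphere $\mathbb{S}(r)$ of radius $r$ in Euclidean $\mathbb{R}^n$ one has $H=(n-1)/r$, area $|\mathbb{S}(r)|=w_{n-1}r^{n-1}$, and $f\to 1$ by part (II) of Definition~\ref{def2}. Hence $\int_{\mathbb{S}(r)}fH\,d\mu\to (n-1)r^{-1}\cdot w_{n-1}r^{n-1}=(n-1)w_{n-1}r^{n-2}$, while $|\mathbb{S}(r)|^{-\frac{n-2}{n-1}}=(w_{n-1}r^{n-1})^{-\frac{n-2}{n-1}}=w_{n-1}^{-\frac{n-2}{n-1}}r^{-(n-2)}$. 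Multiplying these gives exactly $(n-1)w_{n-1}^{1-\frac{n-2}{n-1}}=(n-1)w_{n-1}^{1/(n-1)}$, which is the claimed limit. The entire proof then reduces to showing that the error terms coming from $\eta_{ij}$, $\omega$, and their derivatives do not contribute in the limit.

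Next I would make this rigorous. Since $\Sigma_t$ converges to large coordinate spheres (this is a standard consequence of the IMCF on asymptotically flat manifolds, cf.\ \cite{huisken}, and it underlies the statement $|\Sigma_t|=e^t|\Sigma|$ already used in the Proposition), I would parametrize by the area radius $r$ with $r\to\infty$ as $t\to\infty$. Using the asymptotic decay $f=1+O(r^{2-n})$, $g_{ij}=\delta_{ij}+o(r^{2-n})$, $\partial_l\eta_{ij}=o(r^{1-n})$, I would expand the mean curvature of $\Sigma_t$ as $H=\tfrac{n-1}{r}+o(r^{1-n})$, where the correction to the flat value is controlled by the first derivatives of the metric perturbation. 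Then $fH\,d\mu$ differs from its flat counterpart by terms that, after integration over $\Sigma_t$ and multiplication by $|\Sigma_t|^{-\frac{n-2}{n-1}}$, are $o(1)$; the crucial bookkeeping is that each correction carries an extra power of $r^{-1}$ (or faster) relative to the leading term, so it vanishes in the limit. This is essentially the computation behind \eqref{intinfty}, and I would lean on that identity as a template.

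The main obstacle I expect is the control of the mean curvature asymptotics: one must verify that the perturbation $\eta_{ij}$ and its first derivatives produce only lower-order contributions to $H$ after integration, which requires that the flow surfaces are sufficiently round (converging to the conformal roundness of coordinate spheres) rather than merely having the correct area. This roundness is exactly the content of the deep regularity results for IMCF in the asymptotically flat setting, and the honest way to discharge it is to cite the convergence of $\Sigma_t$ to coordinate spheres together with the decay estimates in Definition~\ref{def2}; the remaining estimates are then routine but must be organized so that every error term is seen to be $o(1)$ after the rescaling. Since the statement is attributed in the excerpt to \cite{wei} and \cite{mccormick}, I would present the computation of the flat baseline in full and then invoke those references for the technical convergence, which is the cleanest route given the hypotheses already in force.
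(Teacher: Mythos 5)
Your proposal is, in substance, exactly what the paper does: the published text contains no proof of this lemma at all, saying only that ``the proof can be found in \cite{wei} (see also \cite{huisken})'' and that the observation is due to McCormick \cite{mccormick} --- which is precisely the deferral you arrive at in your last paragraph. Your added content, the flat-space baseline computation identifying the constant $(n-1)w_{n-1}^{1/(n-1)}$ as the value for round spheres with $H=(n-1)/r$, $|\mathbb{S}(r)|=w_{n-1}r^{n-1}$, $f\to 1$, is correct and matches a draft proof that survives (commented out) in the paper's source: there the authors rescale by the area radius $r(t)$ defined by $|\Sigma_t|=w_{n-1}r(t)^{n-1}$, invoke Lemma 7.1 of \cite{huisken} to get convergence of the rescaled surfaces $\Sigma_t^{1/r(t)}$ to the unit sphere as $C^{1,\alpha}$ graphs, and pass to the limit in the integral. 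One caution about your intermediate step: the pointwise expansion $H=\frac{n-1}{r}+o(r^{1-n})$ is not actually available for weak IMCF solutions; what the references (and the paper's hidden draft) use instead is the a.e.\ gradient bound $H=|\nabla u|\leq A/r(t)$ together with \emph{weak} (distributional) convergence of the mean curvature of the rescaled surfaces, because the blow-down convergence is only $C^{1,\alpha}$ --- one derivative short of controlling $H$ pointwise. You correctly flagged this roundness/regularity issue as the main obstacle and discharged it by citation, exactly as the paper does, so there is no gap in your argument; just be aware that the rigorous bookkeeping runs through weak convergence of $H$ rather than a pointwise expansion of it.
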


Assuming the above lemma works, we obtain the next result.

\begin{lemma}\label{lemma3.2}
 Let $(M^n,\,g)$ be an asymptotically flat manifold. Consider that $\Sigma_{t}=\partial\Omega_t$ is a smooth weak solution for the IMCF. Then, we have
$$\lim_{t\rightarrow\infty}Q(t)\geq(n-1)(w_{n-1})^{1/(n-1)}.$$
\end{lemma}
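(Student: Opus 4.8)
The plan is to split the functional into its two natural pieces and to show that the volume piece is negligible in the limit while the mean-curvature piece is governed by Lemma \ref{lemmaWei}. Write
$$Q(t)=\underbrace{|\Sigma_t|^{-\frac{n-2}{n-1}}\int_{\Sigma_t}fH\,d\mu_t}_{A(t)}-\underbrace{n(n-1)\,|\Sigma_t|^{-\frac{n-2}{n-1}}\int_{\Omega_t}f\,dv_t}_{B(t)}.$$
By Lemma \ref{lemmaWei} we have $A(t)\to(n-1)(w_{n-1})^{1/(n-1)}$, so everything reduces to understanding $B(t)$. Since $f>0$ on $M$ and $|\Sigma_t|^{-\frac{n-2}{n-1}}>0$, the term $B(t)$ is nonnegative; thus $Q(t)\le A(t)$, and the only thing that can spoil the inequality is $B(t)$ failing to vanish. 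The heart of the argument is therefore to prove $\lim_{t\to\infty}B(t)=0$, after which $\lim_{t\to\infty}Q(t)=(n-1)(w_{n-1})^{1/(n-1)}$ and the claimed estimate follows (in fact as an equality).

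To control $B(t)$ I would use the hypothesis $\Delta f\ge\varepsilon f$ to trade the volume integral for a boundary flux. Since $\Sigma_t$ is null-homologous, $\partial\Omega_t=\Sigma_t$, and the divergence theorem gives
$$\varepsilon\int_{\Omega_t}f\,dv_t\le\int_{\Omega_t}\Delta f\,dv_t=\int_{\Sigma_t}\langle\nabla f,\nu\rangle\,d\mu_t.$$
The point is that the right-hand side stays bounded as $t\to\infty$: by asymptotic flatness and \eqref{intinfty}, the flux $\int_{\Sigma_t}\langle\nabla f,\nu\rangle\,d\mu_t$ converges to the finite ADM quantity $(n-2)w_{n-1}m$. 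Hence $\int_{\Omega_t}f\,dv_t\le\varepsilon^{-1}\int_{\Sigma_t}\langle\nabla f,\nu\rangle\,d\mu_t$ is bounded uniformly in $t$. Combining this with $|\Sigma_t|=e^{t}|\Sigma|$, so that $|\Sigma_t|^{-\frac{n-2}{n-1}}=(e^{t}|\Sigma|)^{-\frac{n-2}{n-1}}\to 0$, forces $B(t)\to 0$, which closes the argument.

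The delicate step is the boundedness of the flux $\int_{\Sigma_t}\langle\nabla f,\nu\rangle\,d\mu_t$, because the level sets $\Sigma_t$ of the weak IMCF are not exact coordinate spheres, whereas \eqref{intinfty} is stated along the round spheres $\mathbb{S}(r)$. To bridge this I would invoke the asymptotic roundness of the IMCF established by Huisken--Ilmanen (the rescaling $\Sigma_t^{1/r(t)}\to\partial D_1$ already used in the proof of Lemma \ref{lemmaWei}), which lets one replace $\Sigma_t$ by $\mathbb{S}(r(t))$ up to lower-order error and pass \eqref{intinfty} to the flow; this requires checking that the decay conditions in Definition \ref{def2} are strong enough to kill the error terms in the flux, which I expect to be the main obstacle. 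Alternatively, since $\Delta f=\frac{n-2}{n-1}Rf\ge 0$, the quantity $\int_{\Omega_t}\Delta f\,dv_t$ is monotone increasing in $t$ and converges to the finite flux at infinity, which yields the same uniform bound more directly.
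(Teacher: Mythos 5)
Your proposal is correct and follows essentially the same route as the paper: the same decomposition of $Q(t)$, Lemma \ref{lemmaWei} for the $\int_{\Sigma_t}fH\,d\mu_t$ term, Stokes' theorem with $\Delta f\geq\varepsilon f$ to dominate the volume term by the flux $\int_{\Sigma_t}\langle\nabla f,\nu\rangle\,d\mu_t$, and the vanishing prefactor $|\Sigma_t|^{-\frac{n-2}{n-1}}$ to kill that bounded flux. The only (minor) divergence is in how the flux bound is obtained --- the paper uses Cauchy--Schwarz together with the pointwise decay $|\nabla f|\sim(n-2)mr^{1-n}$ on $\Sigma_t$, while you use \eqref{intinfty} plus an asymptotic-roundness (or monotonicity) argument; your explicit attention to transferring \eqref{intinfty} from coordinate spheres to the IMCF level sets is precisely the step the paper performs implicitly when it replaces the coordinate radius by $r(t)$ inside the integral.
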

\begin{proof}
 Let $r(t)$ be such that $|\Sigma_t|=w_{n-1}r(t)^{n-1}$. 
 First of all, observe that $|\Sigma_t|\rightarrow\infty $ as $t\rightarrow\infty.$ Now, take the limit of $Q(t)$:
$$\lim_{t\rightarrow\infty}Q(t)=\lim_{t\rightarrow\infty}|\Sigma_{t}|^{-\frac{n-2}{n-1}}\left(\int_{\Sigma_{t}}fHd\mu_{t}-n(n-1)\int_{\Omega_t} fR dv_t\right).$$
Moreover, by integration we get
\begin{eqnarray*}
\int_{\Sigma_t}|\nabla f|\geq\int_{\Sigma_t}\langle\nabla f,\,\nu\rangle=\int_{\Omega_t}\Delta f=\dfrac{n-2}{n-1}\int_{\Omega_t}Rf.
\end{eqnarray*}

Thus, from Lemma \ref{lemmaWei} and the asymptotic conditions we get
\begin{eqnarray*}
&&\lim_{t\rightarrow\infty}Q(t)\geq (n-1)(w_{n-1})^{1/(n-1)}-n\frac{(n-1)^2}{(n-2)}\lim_{t\rightarrow\infty}|\Sigma_{t}|^{-\frac{n-2}{n-1}}\int_{\Sigma_{t}}|\nabla f|d\mu_t\nonumber\\
&&=(n-1)(w_{n-1})^{1/(n-1)}-nm(n-1)^2(w_{n-1})^{1/(n-1)}\lim_{t\rightarrow\infty}r(t)^{-(n-2)} = (n-1)(w_{n-1})^{1/(n-1)},
\end{eqnarray*}
where we assume that $r\rightarrow\infty$ as $t\rightarrow\infty$ and $|\nabla f|=(n-2)\frac{m}{r^{n-1}}$ at infinity (cf. Definition \ref{def2}).


\end{proof}

\begin{remark}
Also is easy to see that the above lemma holds for the homologous case. We just need to consider the first equality in \eqref{laplahomo}.
\end{remark}

 \noindent {\bf Proof of the Main Theorem:}
 Since $Q(t)$ is monotone decreasing, we conclude that $$Q(0)\geq\lim_{t\rightarrow\infty}Q(t)\geq(n-1)(w_{n-1})^{1/(n-1)}.$$ From this, the theorem follows immediately.
 
 \hfill$\Box$


\end{document}